\pgfplotsset{compat=newest}
\newtheorem{defn}{Definition}
\newtheorem{lem}[defn]{Lemma}
\newtheorem{assum}[defn]{Assumption}
\newtheorem{ex}[defn]{Example}
\newtheorem{thm}[defn]{Theorem}
\providecommand{\R}{\ensuremath \mathbb{R}}
\providecommand{\N}{\ensuremath \mathbb{N}}
\providecommand{\ip}[1]{\ensuremath \langle #1\rangle}
\providecommand{\spt}{\ensuremath \text{spt}}
\tikzstyle{stuff_nofill}=[rectangle,draw,font={A}]
\tikzstyle{stuff_fill}=[rectangle,draw=none,fill=white,font={A}]
\title{Convex Estimation of the $\alpha$-Confidence Reachable Sets of Systems with Parametric Uncertainty}
\author{Patrick~Holmes,~Shreyas~Kousik,~Shankar~Mohan~and~Ram~Vasudevan
 \thanks{S.~Mohan is with the Department of Electrical Engineering and Computer Science, University of Michigan, Ann Arbor, MI 48109
{\scriptsize \texttt{elemsn@umich.edu}}}
 \thanks{P.~Holmes, S.~Kousik \& R.~Vasudevan are with the Department of Mechanical Engineering, University of Michigan, Ann Arbor, MI 48109
{\scriptsize \{\texttt{pdholmes,~skousik,~ramv}\}~\texttt{@umich.edu}}}
}
\begin{document}

\maketitle
\begin{abstract}
	Accurately modeling and verifying the correct operation of systems interacting in dynamic environments is challenging.
	By leveraging parametric uncertainty within the model description, one can relax the requirement to describe exactly the interactions with the environment; however, one must still guarantee that the model, despite uncertainty, behaves acceptably. 
	This paper presents a convex optimization method to efficiently compute the set of configurations of a polynomial dynamical system that are able to safely reach a user defined target set despite parametric uncertainty in the model.
	Since planning in the presence of uncertainty can lead to undesirable conservativeness, this paper computes those trajectories of the uncertain nonlinear systems which are $\alpha$-probable of reaching the desired configuration.
	The presented approach uses the notion of occupation measures to describe the evolution of trajectories of a nonlinear system with parametric uncertainty as a linear equation over measures whose supports coincide with the trajectories under investigation.
	This linear equation is approximated with vanishing conservatism using a hierarchy of semidefinite programs each of which is proven to compute an approximation to the set of initial conditions that are $\alpha$-probable of reaching the user defined target set safely in spite of uncertainty.
	The efficacy of this method is illustrated on four systems with parametric uncertainty.
\end{abstract}
\section{Introduction}

Verifying the correct operation of systems interacting in dynamic environments is challenging.
In fact, the difficulties associated with modeling such systems exactly compounds this verification challenge.
By introducing parametric uncertainty within the model, one can compensate directly for the inability to construct exact models; however, to ensure the satisfactory operation of uncertain systems one must provide systematic guarantees on all probable behaviors.
Unfortunately, unforeseen conservativeness may arise when certain low probability outcomes restrict the potential behavior of the system.
To address this shortcoming, this paper presents an approach to compute the set of initial conditions of a nonlinear system with parametric uncertainty that are at least $\alpha$-probable of arriving at a user defined target set. 

A variety of numerical methods have been proposed to verify the satisfactory operation of nonlinear systems with parametric uncertainty. 
The most popular of these approaches have relied upon generating or evaluating pre-constructed Lyapunov functions to compute the domain of attraction of an uncertain system \cite{chesi2004estimating,topcu2007stability}.
This has required checking Lyapunov's criteria for polynomial systems by using sums-of-squares programming, which results in a bilinear optimization problem that is usually solved using some form of alternation \cite{prajna2007convex}. 
However, such methods are not guaranteed to converge to global optima (or necessarily even local optima), and require feasible initializations.

Others have developed tools to perform safety verification of more general stochastic nonlinear dynamical systems \cite{mitchell2005time,prajna2007framework}. 
Hamilton-Jacobi Bellman based approaches, for example, have also been applied to compute the uncertain backwards reachable set for nonlinear systems with arbitrary uncertainty affecting the state at any instance in time \cite{mitchell2005time}.
These approaches solve a more general problem and scale well despite state space discretization when the specific system under consideration has special structure \cite{maidens2013lagrangian}.
Barrier certificate methods \cite{prajna2007convex} have also been utilized to perform stochastic safety verification by using a super martingale. 

This paper leverages a method developed in a recent paper that describes the evolution of trajectories of an uncertain dynamical system using a linear equation over measures \cite{mohan2016convex}. 
As a result of this characterization, the set of configurations that are able to reach a target set despite parametric uncertainty, called the \emph{uncertain backwards reachable set}, can be computed as the solution to an infinite dimensional linear program over the space of nonnegative measures.
This approach, which was inspired by several recent papers \cite{henrion2014convex,majumdar2014convex,shia2014convex}, computes an approximate solution to this infinite dimensional linear program using a sequence of finite dimensional relaxed semi-definite programs via Lasserre's hierarchy of relaxations \cite{lasserre2001global} that each satisfy an important property:
each solution to this sequence of semi-definite programs is an outer approximation to the uncertain backwards reachable set with asymptotically vanishing conservatism.
Our approach will utilize this same formulation to construct an outer approximation to the set of $\alpha$-probable points in the uncertain backwards reachable set which we call the \emph{$\alpha$-level backwards reachable set}.

This approach of characterizing the behavior of the system using an infinite dimensional program over measures has also been used to perform safety verification of stochastic nonlinear systems \cite{sloth2015safety}. 
In that instance, initial conditions of the stochastic system whose trajectories on \emph{average}
have probability higher than some user-specified $p$ of arriving at some target set were computed using a semidefinite programming hierarchy.
In this paper, we consider instead the problem of determining which set of initial conditions of a dynamical system have a user-specified probability of arriving at a target set under parametric uncertainty within the model. 
Since there is no stochastic behavior in the dynamical system, our approach does not consider an average probability over each trajectory.

The remainder of the paper is organized as follows:
Section \ref{sec:preliminaries} introduces the notation used in the remainder of the paper, the class of systems under consideration, and the backwards reachable set problem under parametric uncertainty;
Section \ref{sec:prob} describes how the $\alpha$-level backwards reachable set under parametric uncertainty is the solution to an infinite dimensional linear program;
Section \ref{sec:implementation} constructs a sequence of finite dimensional semidefinite programs that outer approximate the infinite dimensional linear program with vanishing conservatism;
Section \ref{sec:examples} describes the performance of the approach with three examples;
and, Section \ref{sec:conclusion} concludes the paper.

\section{Preliminaries}
\label{sec:preliminaries}

This section describes the class of systems under consideration and outlines the problem of interest.
\subsection{Notation}
In the remainder of this text the following notation is adopted: sets are italicized and capitalized (ex. $K$). The set of continuous functions on a compact set $K$ are denoted by $\mathcal C(K)$.
The ring of polynomials in $x$ is denoted by $\R[x]$, and the degree of a polynomial is equal to the degree of its largest multinomial; the degree of the multinomial $x^\alpha,\,\alpha\in \N_{\ge 0}^n$ is $|\alpha|=\|\alpha\|_1$; and $\R_d[x]$ is the set of polynomials in $x$ with maximum degree $d$.
The dual to $\mathcal C(K)$ is the set of Radon measures on $K$, denoted as $\mathcal M(K)$, and the pairing of $\mu\in \mathcal M(K)$ and $v\in \mathcal C(K)$ is:
  \begin{align}
  \ip{\mu,v}=\int_{K}v(x)\,d\mu(x).
  \end{align}
We denote the nonnegative Radon measures by ${\cal M}_+(K)$. The space of Radon probability measures on $K$ is denoted by ${\cal P}(K)$.
The Lebesgue measure is denoted by $\lambda$. Finally, the support of measures, $\mu$, is identified as $spt(\mu)$.
\subsection{System class}
In this paper, we restrict our attention to the class of parametrically uncertain drift systems; i.e. systems of the following form:
\begin{align}
  \dot {x} = f(x,\theta),
\end{align}
where $x\in X$, are the states of the system, and $\theta\in \Theta$ are uncertain parameters.
\begin{assum}
  $X$ and $\Theta$ are compact, and $f$ is Lipschitz continuous in $x$ and $\theta$.
\end{assum}
\begin{ex}[Van der Pol Oscillator]
Consider the uncertain Van der Pol oscillator whose dynamics is:
  \begin{align}
  \begin{aligned}
  \dot x_1 =&\,    -2x_2\\
  \dot x_2=&\,    0.8x_1+(9+5\theta)x_2(x_1^2-0.21)
    \end{aligned}
\end{align}
where $\theta \in [-0.5,0.5]$ and $x(t)\in X=[-1,1]$. In this example, the limit cycle of the Van der Pol oscillator deforms as the value of $\theta$ changes: the larger the value of $\theta$, the smaller the volume of the area contained inside the limit cycle.
\end{ex}
Parameters $\theta\in \Theta$ are assumed to be drawn according to a probability distribution $\mu_\theta\in \mathcal P(\Theta)$.
\begin{assum}
  If the uncertain parameter $\theta$ is distributed according to $\mu_\theta$, $\mu_\theta$ is absolutely continuous with respect to the Lebesgue measure. We denote this by: $\mu_\theta \ll \lambda_\theta$, where $\lambda_\theta$ is the Lebesgue measure on $\spt(\mu_\theta)$.
\end{assum}
It is assumed that the unknown parameters do not change with time and that they are instantiated at time $t=0$. That is, the uncertain system can be thought to evolve in the embedded space $X\times \Theta$ according to the following dynamics
\begin{align}
    \begin{bmatrix}
    \dot {x}\\\dot \theta
  \end{bmatrix} = \begin{bmatrix}
    f(x,\theta)\\0
  \end{bmatrix}.
  \label{eq:aug_system}
\end{align}
For notational convenience, we denote the unique solution to the dynamics in Eqn.~(\ref{eq:aug_system}) as the absolutely continuous function $\gamma$ defined as follows:
\begin{align}
  \gamma : [0,T]\xrightarrow{\text{Eqn.}~(\ref{eq:aug_system})} X\times \Theta,\phantom{3} \gamma(0) = [x;\theta].
\end{align}
In addition, let us denote by $\mathcal T$, the time interval $[0,T]$.
\subsection{Problem Description}
The objective of this paper is to identify the $\alpha$-confidence, time limited reachable set, or \emph{uncertain backwards reachable set} (BRS) of $X_T$. This definition relies on the following set-valued mapping from $X$ to the Borel $\sigma$-algebra on $\Theta$ (closed sets), $\mathcal B(\Theta)$:
\begin{align}
\begin{split}
  \Gamma(x)=\{\theta \in \Theta\mid \exists\, \gamma :\mathcal T\xrightarrow{\text{Eqn.}~(\ref{eq:aug_system})} X\times \Theta, \text{ with }\\
  \gamma(0)= [ x ;\theta],\gamma(T)\in X_T\times \Theta\}.
\end{split}
\label{eq:Gamma}
\end{align}
For a given value of $x\in X$, $\Gamma(x)$ is the set of distinct values of the parameter $\theta$, such that the solution trajectories of the system in Eqn.~(\ref{eq:aug_system}), with the states initialized to $[x;\Gamma(x)]$ arrives at $X_T$ at time $T$.
\begin{defn}
\label{defn:brs}
  The $T$-time $\alpha$-confidence backwards reachable set of $X_T$, under the dynamics in Eqn.~(\ref{eq:aug_system}), is the defined as follows
  \begin{align}
    X_0^{\alpha} = \{x_0\mid \mu_\theta(\Gamma(x_0))\ge \alpha\}
  \end{align}
\end{defn}
The $\alpha$-confidence BRS is the set of initial values of $x$ such that for each $x$, the {\em mass} of $\Gamma(x)$ under $\mu_\theta$ is larger than $\alpha$; i.e. the set of initial conditions for which the probability of arriving in $X_T$ at $t=T$ is greater or equal to $\alpha$.
The remainder of this paper is devoted to tractably computing $X_0^{\alpha}$.

\section{Problem formulation}
\label{sec:prob}
In this section, we present a methodology to compute the time limited $\alpha$-confidence backwards reachable set of dynamic systems. The proposed methodology consists of two steps: (1) estimating the set of all feasible initial conditions of the system in Eqn.~(\ref{eq:aug_system}) such that $\gamma(T)\in X_T\times \Theta$; (2) determining the subset of initial conditions that reach the target set with desired probability. Step (1) is addressed by solving an infinite dimensional problem, and step (2) requires integrating the optimal solution of step (1).

To estimate the BRS, we use the notion of {\em occupation measures} \cite{pitman1977}. Given an initial condition for the system, the occupation measure evaluates to the amount of time spent by the resultant trajectory in any subset of the space. The occupation measure $\mu(\cdot\mid x_0,\theta)\in \mathcal M_+(\mathcal T\times X\times \Theta\mid x_0,\theta)$ is formally defined as follows:
\begin{align}
  \mu(A\times B\times C\mid x_0,\theta) = \int_{0}^T I_{A\times B\times C}(t,x,\theta\mid x_0,\theta)\,dt,
  \label{eq:occ}
\end{align}
where $I_{A}(x)$ is the indicator function on the set $A$ that returns one if $x\in A$ and zero otherwise. With the above definition of the occupation measure, using elementary functions, it can be shown that:
\begin{align}
  \ip{\mu(\cdot\mid x_0,\theta),v} = \ip{\lambda_t,v(t,x(t\mid x_0,\theta),\theta)},
  \label{eq:ip}
\end{align}
where $\lambda_t$ is the Lebesgue measure on $\mathcal T$.

The {\em occupation measure} has an interesting characteristic -- it completely characterizes the solution trajectory of the system resulting from an initial condition. Observe that the occupation measure as defined in Eqn.~(\ref{eq:occ}) is conditioned on the initial values of states and parameters. Since we are interested in the collective behavior of a set of initial conditions, we define the {\em average} occupation measure as:
\begin{align}
\mu(A\times B\times C) = \int_{X\times \Theta} \mu(A\times B\times C\mid x,\theta)\,d\mu_0,
\end{align}
where $\mu_0$ is the un-normalized distribution of initial conditions. The value to which the {\em average} occupation measure evaluates over a given set in $\mathcal T\times X\times \Theta$ can be interpreted as the cumulative time spent by all solution trajectories which begin in $\spt(\mu_0)$.
\par
Given a test function $v\in \mathcal C^1(\mathcal T\times X\times \Theta)$, using the Fundamental Theorem of Calculus, its value at time $t=T$ is given by
\begin{align}
\begin{split}
  v(T,x(T\mid x_0,\theta),\theta) =&\, v(0,x_0,\theta)\\
  +&\,\int\limits_{0}^T\hspace*{-.08in}\left(\frac{\partial v}{\partial x}\cdot f+\frac{\partial v}{\partial t}\right)(t,x(t\mid x_0,\theta),\theta)\,dt
\end{split}
  \label{eq:ftc}
\end{align}
Using the relation defined in Eqn.~(\ref{eq:ip}) and by defining the linear operator $\mathcal L_f$ on $\mathcal C^1$ functions (Lie derivative) as the following:
\begin{align}
  \mathcal L_{f}v = \frac{\partial v}{\partial x}\cdot f+ \frac{\partial v}{\partial t},
\end{align}
Eqn.~(\ref{eq:ftc}) is re-written as
\begin{align}
\begin{split}
  v(T,x(T\mid x_0,\theta),\theta) =&\, v(0,x_0,\theta)\\
  +&\,\int\limits_{X\times \Theta}\mathcal L_{f}v\,d\mu(t,x,\theta\mid x_0,\theta).
\end{split}
\label{eq:ftc2}
\end{align}
Integrating Eqn.~(\ref{eq:ftc2}) with respect to $\mu_0$, the distribution of initial conditions, and defining a new measure $\mu_T\in \mathcal M_+ (X_T\times \Theta)$, as the following
\begin{align}
  \mu_T(A\times B) = \int\limits_{X\times \Theta}I_{A\times B}(x(T\mid x_0,\theta),\theta)\,d\mu_0,
\end{align}
produces the following equality
\begin{align}
  \ip{\delta_T\otimes \mu_T,v}= \ip{\delta_0\otimes \mu_0,v}+\ip{\mu,\mathcal L_{\tilde f}v},
  \label{eq:lv}
\end{align}
where, with a slight abuse of notations, $\delta_t$ is used to denote a Dirac measure situated at time $t$. Using adjoint notations, Eqn.~(\ref{eq:lv}) can be written as:
\begin{align}
  \delta_T\otimes \mu_T= \delta_0\otimes \mu_0+\mathcal L_{f}'\mu
  \label{eq:adjointlv}
\end{align}
Equation~(\ref{eq:adjointlv}) is a version of the Liouville equation, holds for all test function $v\in \mathcal C^1(\mathcal T\times X\times \Theta)$, and summarises the visitation information of all trajectories that emanate from $\spt(\mu_0)$ and terminate in $\spt(\mu_T)$. Several recent papers provide a more detailed discussion on the Liouville equation \cite{henrion2014convex},\cite{mohan2016convex}.
\par
Within this framework of measures and the Liouville Equation, we first formulate the problem of identifying the set of all pairs $(x_0,\theta)$ such that the solution trajectory of the system in Eqn.~(\ref{eq:aug_system}) initialized at $[x;\,\theta]$ arrives at $X_T\times \Theta$ at $t=T$. This problem can be interpreted as one that attempts to identify the largest support for $\mu_0$ that ensures the existence of measures $\mu$ and $\mu_T$ such that $(\mu_0,\mu_T,\mu)$ satisfy Eqn.~(\ref{eq:lv}). To measure the size of $\spt(\mu_0)$, we use the Lebesgue measure on $X\times \Theta$, $\lambda_x\otimes \lambda_\theta$.
\par
This problem is posed as an infinite dimensional Linear Program (LP) on measures as defined below.
%
%
\begin{flalign}\nonumber
    & & \sup_{\Lambda} \hspace*{1cm} & \ip{\mu_{0},\mathds 1} && (P)\\
    & & \text{st.} \hspace*{1cm} &\mu_{0}+\mathcal L_{f}'\mu=\,\mu_{T} && \label{eq:primal:liouville}\\
    & & & \mu_{0}+\hat\mu_{0}=\,\lambda_x\otimes \lambda_\theta &&
\end{flalign}
where $\lambda_x\otimes \lambda_\theta$ is the Lebesgue measure supported on $X\times \Theta$, $\Lambda:=(\mu_0,\hat\mu_{0},\mu_T) \in {\mathcal M}_+({\mathcal T} \times X \times \Theta) \times \mathcal M_+( X \times \Theta)\times \mathcal M_+( X_{T}\times \Theta )$ and $\mathds 1$ denotes the function that takes value $1$ everywhere.
\begin{lem}
  The support of $\mu_0$, $\spt(\mu_0)\subset X\times \Theta$ is the largest collection of pairs $(x,\theta)$ that, if used as initial conditions to Eqn. \eqref{eq:aug_system}, produce a solution trajectory that terminates in $X_T\times \Theta$ at $t=T$.
\end{lem}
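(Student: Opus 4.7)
The plan is to prove the lemma by establishing two facts: (i) for every feasible triple $(\mu_0,\hat\mu_0,\mu_T)$ of problem (P), the support of $\mu_0$ is contained in the set $X_0^{\rm feas} := \{(x,\theta)\in X\times\Theta \mid \gamma(T)\in X_T\times\Theta\text{ with }\gamma(0)=[x;\theta]\}$ of initial conditions whose trajectories under \eqref{eq:aug_system} terminate in $X_T\times\Theta$; and (ii) the supremum in (P) is attained by a measure $\mu_0^\star$ whose support is exactly $X_0^{\rm feas}$. Together, (i) and (ii) identify $\spt(\mu_0)$ at optimality as the largest collection of such pairs.

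For (i), I would read the first constraint \eqref{eq:primal:liouville} as the adjoint form of \eqref{eq:adjointlv}, i.e. as the continuity (Liouville) equation for the augmented vector field $(f,0)$. Because $f$ is Lipschitz on the compact set $X\times\Theta$, trajectories of \eqref{eq:aug_system} are unique, and standard uniqueness results for measure-valued solutions to continuity equations with Lipschitz fields (invoked in \cite{henrion2014convex,mohan2016convex}) force $\mu$ to be the average occupation measure of the trajectories emanating from $\mu_0$, and $\mu_T$ to be the time-$T$ pushforward of $\mu_0$ by the flow. Since feasibility of (P) constrains $\mu_T\in\mathcal M_+(X_T\times\Theta)$, every point in $\spt(\mu_0)$ must flow into $X_T\times\Theta$ at $t=T$; that is, $\spt(\mu_0)\subseteq X_0^{\rm feas}$.

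For (ii), I would exhibit an explicit candidate: set $\mu_0^\star := \mathds{1}_{X_0^{\rm feas}}\cdot(\lambda_x\otimes\lambda_\theta)$, $\hat\mu_0^\star := (\lambda_x\otimes\lambda_\theta)-\mu_0^\star$, let $\mu^\star$ be the average occupation measure of the trajectories emanating from $\spt(\mu_0^\star)$, and let $\mu_T^\star$ be their time-$T$ pushforward, which by the definition of $X_0^{\rm feas}$ is supported in $X_T\times\Theta$. A direct calculation using \eqref{eq:ftc2} verifies that $(\mu_0^\star,\hat\mu_0^\star,\mu_T^\star)$ satisfies both constraints of (P). Moreover, the second constraint together with $\hat\mu_0\ge 0$ forces $\mu_0\le\lambda_x\otimes\lambda_\theta$ for any feasible $\mu_0$, so combining this with (i) gives $\ip{\mu_0,\mathds{1}}\le (\lambda_x\otimes\lambda_\theta)(X_0^{\rm feas}) = \ip{\mu_0^\star,\mathds{1}}$. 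Hence $\mu_0^\star$ is optimal and, by (i), any other optimizer must agree with $\mu_0^\star$ on $X_0^{\rm feas}$ up to a $\lambda$-null set, yielding the claimed characterization of $\spt(\mu_0)$.

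The main obstacle is the rigorous justification of step (i): a priori, the measure-theoretic Liouville equation could admit generalized solutions that are not concentrated on graphs of classical trajectories, and excluding these requires the uniqueness theory for continuity equations with Lipschitz vector fields. Once this flow interpretation is imported from \cite{henrion2014convex,mohan2016convex}, the remaining arguments reduce to tracking the pointwise bound $\mu_0\le\lambda_x\otimes\lambda_\theta$ and observing that the objective is total mass, so it is maximized precisely by saturating this bound on $X_0^{\rm feas}$.
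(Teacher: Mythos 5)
The paper states this lemma without any proof, implicitly deferring to \cite{henrion2014convex} and \cite{mohan2016convex}; your argument is essentially the one given there (Henrion--Korda's characterization of the optimal $\mu_0$ as the restriction of Lebesgue measure to the reachable set), so there is no divergence of approach to report. Your two-step decomposition is the right one, and you correctly isolate the only genuinely delicate point: step (i) requires the superposition principle for measure-valued solutions of the Liouville equation, which for a Lipschitz vector field forces $\mu$ to disintegrate into occupation measures of classical trajectories and $\mu_T$ to be the time-$T$ pushforward of $\mu_0$. Two small refinements you should make explicit. First, the superposition principle only gives that $\mu_0$-almost every initial condition flows into $X_T\times\Theta$ while remaining in $X$; to conclude this for \emph{every} point of $\spt(\mu_0)$ you need that $X_0^{\rm feas}$ is closed, which follows from continuous dependence on initial conditions together with compactness of $X_T\times\Theta$ and $X\times\Theta$. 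Second, the support of your optimizer $\mu_0^\star=\mathds{1}_{X_0^{\rm feas}}\cdot(\lambda_x\otimes\lambda_\theta)$ is the closure of the set of points whose neighborhoods meet $X_0^{\rm feas}$ in positive Lebesgue measure, so it agrees with $X_0^{\rm feas}$ only up to a Lebesgue-null set; the lemma as stated glosses over this, and your closing sentence about optimizers agreeing up to a $\lambda$-null set is the honest form of the conclusion. Neither point is a gap in substance, and the reduction of optimality to the pointwise bound $\mu_0\le\lambda_x\otimes\lambda_\theta$ saturated on $X_0^{\rm feas}$ is exactly right.
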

\par
The dual problem, on continuous functions, corresponding to $(P)$ is the following
\begin{flalign}\nonumber
  & &  \inf_{\Xi}&\phantom{3} \ip{\lambda_x\otimes \lambda_\theta,w}          && (D) \nonumber \\
  & &  \text{st.}&\phantom{3}\mathcal L_{f} v(t,x,\theta)\le 0         && \forall (t,x,\theta)\in \mathcal T\times X\times\Theta\\
  & &  &\phantom{3} w(x,\theta) \ge 0                                         && \forall (x,\theta)\in X\times\Theta\\
  & &  &\phantom{3} w(x,\theta) - v(0,x,\theta) - 1 \ge 0                     && \forall (x,\theta)\in X\times\Theta\\
  & & & \phantom{3} v(T,x,\theta)\ge 0                                        && \forall (x,\theta)\in X_T\times \Theta
\end{flalign}
where $\Xi := (v,w)\in \mathcal C^1(\mathcal T\times X\times \Theta)\times \mathcal C(X\times \Theta)$. The solution to $(D)$ has an interesting interpretation: $v$ is similar to a Lyapunov function for the system, and $w$ resembles an indicator function on $\spt(\mu_0)$. Moreover:
\begin{lem}
  There is no duality gap between problems $(P)$ and $(D)$.
\end{lem}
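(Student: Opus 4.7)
My plan is to establish strong duality by casting $(P)$ and $(D)$ as a conjugate pair in the infinite-dimensional LP framework and then verifying the standard hypotheses that rule out a duality gap.

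First, I would rewrite $(P)$ in abstract form $\sup \ip{\Lambda,c}$ subject to $\mathcal{A}\Lambda = b$, $\Lambda \in \mathcal{M}_+$, where $\Lambda=(\mu_0,\hat\mu_0,\mu_T,\mu)$ lives in a product of nonnegative-measure cones on the compact sets $X\times\Theta$, $X_T\times\Theta$, and $\mathcal{T}\times X\times\Theta$, the right-hand side is $b=(0,\lambda_x\otimes\lambda_\theta)$, and the linear constraint operator $\mathcal{A}$ is the adjoint of the map sending a test pair $(v,w)\in\mathcal{C}^1(\mathcal{T}\times X\times\Theta)\times\mathcal{C}(X\times\Theta)$ into the continuous functions appearing in the inequality constraints of $(D)$. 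Compactness of $X$, $\Theta$, $\mathcal{T}$ together with Lipschitz continuity of $f$ (Assumption~1) make $\mathcal{A}$ weak-$*$ continuous, and $(D)$ is then exactly its conjugate program.

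Next, I would verify three standard sufficient conditions for zero duality gap. (i) $(P)$ is feasible, which is trivial via $\mu_0=\mu=\mu_T=0$, $\hat\mu_0=\lambda_x\otimes\lambda_\theta$. (ii) The primal objective is bounded above, since $\hat\mu_0\ge 0$ together with $\mu_0+\hat\mu_0=\lambda_x\otimes\lambda_\theta$ forces $\ip{\mu_0,\mathds 1}\le \lambda_x(X)\lambda_\theta(\Theta)<\infty$. (iii) The feasible set is weak-$*$ compact: pairing \eqref{eq:primal:liouville} with $v\equiv 1$ gives $\ip{\mu_T,\mathds 1}=\ip{\mu_0,\mathds 1}$, and pairing with $v(t,x,\theta)=T-t$ gives $\ip{\mu,\mathds 1}=T\ip{\mu_0,\mathds 1}$, so every feasible $\Lambda$ is jointly bounded in total variation; Banach-Alaoglu then renders the corresponding product of balls weak-$*$ compact, and weak-$*$ continuity of $\mathcal{A}$ makes the feasible set a closed subset of this compact product.

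With (i)--(iii) in hand, no duality gap follows from an abstract infinite-dimensional LP duality theorem applied exactly as in \cite{henrion2014convex,mohan2016convex}; the same compactness argument also yields attainment of the primal supremum. The main technical subtlety I expect is that the test function $v$ lives in $\mathcal{C}^1$ rather than $\mathcal{C}^0$, so the pre-dual pairing defining $\mathcal{A}$ is not the standard measure/continuous-function pairing. The customary remedy, used in those references, is to exploit Stone--Weierstrass density of polynomials in $\mathcal{C}^1$ over the compact domain, which lets one restrict attention to polynomial test functions whose Lie derivatives are still continuous, thereby reducing every constraint to a bona fide $\mathcal{C}^0$ pairing for which the duality and weak-$*$ continuity claims apply verbatim.
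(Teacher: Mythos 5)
The paper offers no proof of this lemma at all---it is stated bare, with the argument implicitly deferred to \cite{henrion2014convex} (Theorem 2 there) and \cite{mohan2016convex}---so your proposal cannot diverge from ``the paper's route''; what you have written is precisely the standard argument those references use, and it is correct in all essentials: trivial primal feasibility, the mass bounds $\ip{\mu_T,\mathds 1}=\ip{\mu_0,\mathds 1}\le\lambda_x\otimes\lambda_\theta(X\times\Theta)$ and $\ip{\mu,\mathds 1}=T\ip{\mu_0,\mathds 1}$ obtained from the test functions $v\equiv 1$ and $v=T-t$, weak-$*$ compactness via Banach--Alaoglu, and the abstract infinite-dimensional LP duality theorem (the closedness of the image cone being the condition those mass bounds are really there to verify). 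The one inaccuracy is your closing remark: the $\mathcal{C}^1$ pairing is not handled in those references by Stone--Weierstrass density of polynomials---they work directly with $\mathcal{L}_f:\mathcal{C}^1\to\mathcal{C}^0$ and its adjoint mapping into the topological dual of $\mathcal{C}^1$, which is all the abstract duality framework requires; polynomial density is invoked only later, for the convergence of the SDP hierarchy. This is a misattribution of where the technical weight lies rather than a gap in the proof.
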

\begin{lem}
  Given the pair of functions $(v^*,w^*)$ which is the optimal solution to $(D)$, the 1-super-level set of $w^*$ contains $\spt(\mu_0)$.
\end{lem}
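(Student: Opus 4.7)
The plan is to show that any point $(x_0,\theta_0)\in\spt(\mu_0)$ must satisfy $w^*(x_0,\theta_0)\ge 1$, by chaining the four dual constraints along the trajectory emanating from $(x_0,\theta_0)$. The previous lemma tells us exactly what $\spt(\mu_0)$ looks like: for any $(x_0,\theta_0)$ in it, there is an absolutely continuous curve $\gamma$ solving Eqn.~(\ref{eq:aug_system}) with $\gamma(0)=[x_0;\theta_0]$ and $\gamma(T)\in X_T\times\Theta$. I would fix such an $(x_0,\theta_0)$ and its trajectory $\gamma(t)=(x(t),\theta_0)$, and then trace the value of $v^*$ along this curve.

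The key step is the integration identity from Eqn.~(\ref{eq:ftc}): applying the Fundamental Theorem of Calculus to $v^*$ along $\gamma$ gives
\begin{equation*}
v^*(T,x(T),\theta_0)-v^*(0,x_0,\theta_0)=\int_0^T \mathcal L_f v^*(t,x(t),\theta_0)\,dt.
\end{equation*}
The first dual constraint $\mathcal L_f v^*\le 0$ on all of $\mathcal T\times X\times\Theta$ is a pointwise inequality on continuous functions, so in particular it holds along $\gamma$, making the right-hand side nonpositive. Therefore $v^*(0,x_0,\theta_0)\ge v^*(T,x(T),\theta_0)$.

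Now I would invoke the remaining two relevant dual constraints. Since $\gamma(T)\in X_T\times\Theta$, the terminal constraint $v^*(T,x,\theta)\ge 0$ on $X_T\times\Theta$ gives $v^*(T,x(T),\theta_0)\ge 0$, and combining with the previous inequality yields $v^*(0,x_0,\theta_0)\ge 0$. Finally, the constraint $w^*(x,\theta)-v^*(0,x,\theta)-1\ge 0$ on $X\times\Theta$ gives $w^*(x_0,\theta_0)\ge v^*(0,x_0,\theta_0)+1\ge 1$. Since $(x_0,\theta_0)$ was arbitrary in $\spt(\mu_0)$, we conclude $\spt(\mu_0)\subseteq\{(x,\theta):w^*(x,\theta)\ge 1\}$.

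I do not expect a real obstacle here; the only subtlety worth flagging is the transition from ``constraint holds on the full domain'' to ``constraint holds along a single trajectory,'' which might otherwise look like a measure-zero evaluation issue. It is fine because the dual constraints are pointwise inequalities on continuous functions over compact sets, so they hold at every point, including along $\gamma$; no weak formulation or mollification is needed. Continuity of $w^*$ then automatically extends $w^*\ge 1$ to the closure, which is already $\spt(\mu_0)$.
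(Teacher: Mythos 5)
Your proof is correct, and it is the standard argument: the paper states this lemma without proof (deferring to the framework of \cite{henrion2014convex}), and the chain of dual constraints along a trajectory --- $\mathcal L_f v^*\le 0$ on $\mathcal T\times X\times\Theta$, the terminal nonnegativity of $v^*(T,\cdot,\cdot)$ on $X_T\times\Theta$, and $w^*\ge v^*(0,\cdot,\cdot)+1$ --- is exactly how it is established there. Two small remarks: your argument only uses \emph{feasibility} of $(v^*,w^*)$, not optimality, so the conclusion in fact holds for every feasible point of $(D)$; and the step applying $\mathcal L_f v^*\le 0$ along $\gamma$ implicitly needs the trajectory to remain in $X\times\Theta$ for all $t\in[0,T]$, which is guaranteed by the definition of $\Gamma$ in Eqn.~(\ref{eq:Gamma}) and is worth stating explicitly.
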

The following salient result on the shape of $w$ is the critical result that we will employ to estimate $X_0$ as defined in Defn.~\ref{defn:brs}.
\begin{thm}\cite[Theorem~3]{henrion2014convex}
\label{thm:indicator}
 There is a sequence of feasible points to $(D)$ whose $w$ component converges uniformly in the $L^1$ norm to the indicator function on $\spt(\mu_0)$.
\end{thm}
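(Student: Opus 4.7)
My plan is to combine the no-duality-gap result from the preceding lemma with a pointwise comparison of any feasible $w$ against the indicator function $\mathds{1}_{\spt(\mu_0^*)}$, and then use the fact that $L^1$ convergence reduces to convergence of integrals whenever one has a one-sided inequality almost everywhere.

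First I would invoke the no-duality-gap lemma to obtain a minimizing sequence $(v_n,w_n) \in \mathcal{C}^1(\mathcal{T}\times X\times\Theta)\times \mathcal{C}(X\times\Theta)$ feasible for $(D)$ with $\langle \lambda_x\otimes\lambda_\theta,\,w_n\rangle \to \langle \mu_0^*,\mathds{1}\rangle$, where $\mu_0^*$ is the optimizer of $(P)$. Since $(P)$ forces $\mu_0^*\le \lambda_x\otimes\lambda_\theta$ and its support is the maximal set of $(x,\theta)$ whose trajectories reach $X_T$ at time $T$ (by the first lemma), we may identify $\mu_0^*$ with $\lambda_x\otimes\lambda_\theta$ restricted to $\spt(\mu_0^*)$. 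In particular $\langle \mu_0^*,\mathds{1}\rangle = (\lambda_x\otimes\lambda_\theta)(\spt(\mu_0^*))$, so the dual objective along $(v_n,w_n)$ converges to the Lebesgue measure of $\spt(\mu_0^*)$.

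Next I would show that every feasible $w_n$ dominates $\mathds{1}_{\spt(\mu_0^*)}$. Fix $(x_0,\theta_0)\in \spt(\mu_0^*)$, and let $\gamma$ be the trajectory of the augmented system with $\gamma(0)=[x_0;\theta_0]$ and $\gamma(T)\in X_T\times\Theta$. Applying the fundamental theorem of calculus to $v_n$ along $\gamma$ gives
\begin{align}
v_n(T,\gamma(T)) - v_n(0,x_0,\theta_0) = \int_0^T \mathcal{L}_f v_n(t,\gamma(t))\,dt \le 0,
\end{align}
where the inequality uses the first constraint of $(D)$. Combined with the terminal constraint $v_n(T,\cdot)\ge 0$ on $X_T\times\Theta$, this yields $v_n(0,x_0,\theta_0)\ge 0$, and then the constraint $w_n \ge v_n(0,\cdot,\cdot)+1$ gives $w_n(x_0,\theta_0)\ge 1$. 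Together with $w_n\ge 0$ on all of $X\times\Theta$, this proves the pointwise bound $w_n \ge \mathds{1}_{\spt(\mu_0^*)}$ Lebesgue-almost everywhere.

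Finally, the $L^1$ convergence follows immediately: because $w_n - \mathds{1}_{\spt(\mu_0^*)} \ge 0$ a.e., one has
\begin{align}
\|w_n - \mathds{1}_{\spt(\mu_0^*)}\|_{L^1(\lambda_x\otimes\lambda_\theta)} = \langle \lambda_x\otimes\lambda_\theta,\,w_n\rangle - (\lambda_x\otimes\lambda_\theta)(\spt(\mu_0^*)),
\end{align}
and the right-hand side tends to $0$ by the minimizing-sequence property established in the first step. The main obstacle I anticipate is the pointwise step: one must argue that the $v_n$-integral-along-trajectories argument is valid for \emph{every} $(x_0,\theta_0)\in \spt(\mu_0^*)$, which in turn requires that $\spt(\mu_0^*)$ be exactly characterized as initial conditions whose trajectories reach $X_T\times\Theta$ at time $T$ (the content of the earlier lemma). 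Up to a Lebesgue-null discrepancy on the boundary this is what the primal feasibility of $\mu_0^*$ provides, and that null set is invisible to the $L^1$ norm.
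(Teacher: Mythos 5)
The paper does not prove this statement at all: it is imported verbatim as a citation of \cite[Theorem~3]{henrion2014convex}, so there is no in-paper proof to compare against. Your reconstruction is essentially the standard argument from that reference (no duality gap $\Rightarrow$ minimizing dual sequence; the constraints $\mathcal L_f v\le 0$, $v(T,\cdot)\ge 0$ on $X_T\times\Theta$, $w\ge v(0,\cdot)+1$, $w\ge 0$ integrated along trajectories give $w_n\ge \mathds 1_{\spt(\mu_0)}$ pointwise; then one-sided domination turns convergence of $\int w_n\,d(\lambda_x\otimes\lambda_\theta)$ into $L^1$ convergence), and it is correct as far as it goes. The one place where you are leaning on more than you acknowledge is the identification $\mu_0^*=\lambda_x\otimes\lambda_\theta\big|_{\spt(\mu_0^*)}$ and hence $\sup(P)=(\lambda_x\otimes\lambda_\theta)(\spt(\mu_0^*))$: knowing $\mu_0^*\le\lambda_x\otimes\lambda_\theta$ and that its support is the BRS does not by itself force equality with the restricted Lebesgue measure (e.g.\ $\tfrac12\lambda\big|_{X_0}$ has the same support). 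What is actually needed is (i) that \emph{every} feasible $\mu_0$ is dominated by $\lambda_x\otimes\lambda_\theta$ restricted to the true BRS, and (ii) that this restriction is itself primal feasible (by pushing it forward under the flow to build $\mu$ and $\mu_T$); together with optimality these give the identification. Both facts are exactly the content of the paper's earlier support lemma and of the attainment argument in \cite{henrion2014convex}, so your proof is complete modulo those cited ingredients, and your closing remark about null-set discrepancies on the boundary correctly disposes of the remaining measure-theoretic slack.
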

An immediate consequence of the above theorem is that we can assume that $w$ evaluates to one on $\spt(\mu_0)$ and zero elsewhere. Now, relating the definition of $\Gamma$ in Eqn.~(\ref{eq:Gamma}) and this feature of $w^*$, the $w$-component of the optimal solution of $(D)$, one arrives at the following result that provides a means to compute the $\alpha$-confidence time limited reachable set, $X_0^\alpha$.
\begin{lem}
Suppose $(v^*,w^*)$ is an optimal solution of $(D)$. The $\alpha$-level backwards reachable of the set $X_T$ under the system dynamics of Eqn. (4) and parametric uncertainty with distribution $\mu_\theta$ is given by the following set
\begin{align}
X_0^\alpha:=\bigg\{x\,\big \mid \int_\Theta w^*(x,\theta)\,d\mu_\theta\ge \alpha \bigg\}
\end{align}
\label{lem:confidence}
\end{lem}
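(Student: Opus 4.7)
The plan is to identify the $w^*$ component of the optimal solution with the indicator function of $\spt(\mu_0)$, and then to rewrite the integral over $\Theta$ in terms of the measure $\mu_\theta$ of the fiber $\Gamma(x)$, so that the defining inequality of the set in question matches Definition~\ref{defn:brs} term by term.

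More concretely, I would first invoke the preceding lemma characterizing $\spt(\mu_0)$: it is exactly the set of pairs $(x,\theta)\in X\times\Theta$ for which the trajectory of Eqn.~\eqref{eq:aug_system} initialized at $(x,\theta)$ lands in $X_T\times\Theta$ at time $T$. Comparing with Eqn.~\eqref{eq:Gamma}, this set coincides with $\{(x,\theta) : \theta\in\Gamma(x)\}$. Next, by Theorem~\ref{thm:indicator}, there is a sequence of feasible $w$'s converging in $L^1$ to $I_{\spt(\mu_0)}$; interpreting $w^*$ in this limiting sense (as the lemma clearly intends, since no exact minimizer need exist in the infinite-dimensional program), we may replace $w^*(x,\theta)$ by $I_{\spt(\mu_0)}(x,\theta) = I_{\Gamma(x)}(\theta)$ almost everywhere.

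With that identification in hand, for each $x\in X$
\begin{align}
\int_\Theta w^*(x,\theta)\,d\mu_\theta(\theta) \;=\; \int_\Theta I_{\Gamma(x)}(\theta)\,d\mu_\theta(\theta) \;=\; \mu_\theta(\Gamma(x)).
\end{align}
Substituting this into the set defined in the lemma statement yields $\{x : \mu_\theta(\Gamma(x))\ge \alpha\}$, which by Definition~\ref{defn:brs} is precisely $X_0^\alpha$.

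The main technical obstacle is the passage from ``there is a sequence $w_k\to I_{\spt(\mu_0)}$ in $L^1$'' to the pointwise statement about a single $w^*$ appearing in the lemma. One has to be careful: the infimum in $(D)$ need not be attained by a continuous $w^*$, and $L^1$ convergence only gives almost-everywhere convergence along a subsequence. To make the argument rigorous, I would state the lemma as applying to the limiting indicator (or apply Fubini/dominated convergence to pass the limit through the integral $\int_\Theta w_k(x,\theta)\,d\mu_\theta$, exploiting that $\mu_\theta\ll\lambda_\theta$ by Assumption~2 so $L^1$ convergence on $X\times\Theta$ transfers, for almost every $x$, to convergence of the fiber integrals). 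Once that measure-theoretic bookkeeping is dispatched, the remainder of the argument is an immediate chain of equalities.
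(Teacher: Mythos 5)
Your proposal is correct and follows essentially the same route as the paper's proof: both arguments identify $w^*$ with the indicator function of $\spt(\mu_0)$ via Theorem~\ref{thm:indicator}, recognize the fiber of $\spt(\mu_0)$ over $x$ as $\Gamma(x)$, and reduce the fiber integral $\int_\Theta w^*(x,\theta)\,d\mu_\theta$ to $\mu_\theta(\Gamma(x))$ so that Definition~\ref{defn:brs} applies directly (the paper merely wraps this same computation in an auxiliary measure $\eta$, its push-forward $\pi^x_*\eta$, and a Radon--Nikodym derivative $\phi$). If anything, you are more careful than the paper about the one genuinely delicate point --- that Theorem~\ref{thm:indicator} only supplies a sequence converging in $L^1$ rather than an attained minimizer equal to the indicator, so the lemma must be read in the limiting sense --- which the paper silently elides with the remark that one ``can assume'' $w$ is the indicator.
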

\begin{proof}
Using the provided information, define the measure $\eta\in \mathcal M_+(X\times \Theta)$ as follows:
\begin{align}
  \eta(A\times B) = \int_{A\times B} w^*(x,\theta) \,d(\lambda_x\otimes \mu_\theta).
\end{align}
It should be noted that $\Gamma(x)$ as defined in Eqn.~(\ref{eq:Gamma}) is the support of the conditional distribution of $\theta$ given $x$ of $\eta$; $\spt(\eta(\cdot\mid x))$. This is true since $w^*$ is an indicator function on $\spt(\mu_0)$, the set of all feasible pairs $(x,\theta)$.
In addition, by definition, $\lambda_x\otimes \mu_\theta \gg \eta$ and hence $\lambda_x\gg \pi^x_*\eta$, where $\pi^x_*\eta$ is the push-forward measure of $\eta$ under the $x$-projection operation as per \cite{lee2003smooth}. Thus, $\phi(x)$, $\lambda_x$ measurable, is the Radon-Nikodym derivative between $\lambda_x$ and $\pi_*^{x}\eta$ such that
\begin{align}
  \pi^x_*\eta(A) =&\, \int_A \phi(x)\,d\lambda_x,\phantom{4}\forall A\subset X.
\end{align}
The function $\phi(x)$, for each $x$ evaluates to the probability that $x$ will reach $X_T$ given the distribution of uncertainty. To see this, observe that
\begin{align}
  \int_A \phi(x)\,d\lambda_x = &\,\int_A \int_\Theta w^*\,d\,\mu_\theta d\lambda_x,\\
  =&\, \int_A \int_{\Gamma(x)} w^*\,d\,\mu_\theta d\lambda_x,\\
  =&\, \int_A \int_{\Gamma(x)} 1\,d\,\mu_\theta d\lambda_x,
\end{align}
where we have used the fact that $w^*\ge 1,\, \forall(x,\theta)\in \spt(\mu_0)$ (from Thm.~\ref{thm:indicator}), that $\mu_\theta$ is a probability measure, and Defn.~\ref{defn:brs}. The statement of the Lemma now follows from Defn.~\ref{defn:brs}.
\end{proof}
Lemma~\ref{lem:confidence} provides a means to compute the $\alpha$-confidence $T$-time backwards reachable set; one just has to integrate the $w$ component of the optimal solution to $(D)$ with respect to the distribution of $\theta$ and identify level sets. Solving the infinite dimensional problem is nontrivial; in the following section, we employ Lasserre's hierarchy of relaxations to arrive at a sequence of outer approximations of $X_0^\alpha$.

\section{Numerical Implementation}
\label{sec:implementation}

In this section, a sequence of Semidefinite Programs (SDPs) that approximate the solution to the infinite dimensional primal and dual defined in Sec.~\ref{sec:prob} are introduced.

\subsection{Lasserre's relaxations}
This sequence of relaxations is constructed by characterizing each measure using a sequence of moments\footnote{The $n$th moment of a measure ($\mu$) is obtained by evaluating the following expression
  $$y_{\mu,n}=\ip{\mu,x^n}.$$}
and assuming the following:
\begin{assum}
The dynamical system in Eqn. \eqref{eq:aug_system} is a polynomial.
Moreover the domain, the set of possible values of uncertainties, and the target set are semi-algebraic sets.
  \label{assump:poly}
\end{assum}
Recall that polynomials are dense in the set of continuous functions by the Stone-Weierstrass Theorem so this assumption is made without too much loss of generality.
\par
Under this assumption, given any finite $d$-degree truncation of the moment sequence of all measures in the primal $(P)$, a primal relaxation, $(P_d)$, can be formulated over the moments of measures to construct an SDP.
The dual to $(P_d)$, $(D_d)$, can be expressed as a sums-of-squares (SOS) program by considering $d$-degree polynomials in place of the continuous variables in $D$.
\par
To formalize this dual program, first note that a polynomial $p \in \R[x]$ is SOS or $p \in \text{SOS}$ if it can be written as $p(x) = \sum_{i=1}^m q_i^2(x)$ for a set of polynomials $\{q_i\}_{i=1}^m \subset \R[x]$.
Note that efficient tools exist to check whether a finite dimensional polynomial is SOS using SDPs~\cite{parrilo2000structured}.
Next, suppose we are given a semi-algebraic set $A = \{x \in \R^n \mid h_{i}(x) \geq 0, h_i \in \R[x], \forall i \in \N_m \}$.
We define the $d$-degree {\em quadratic module} of $A$ as:
\begin{align}
  \begin{split}
  Q_d(A)=\bigg\{q\in \R_d[x]\,\bigg|\, \exists \{s_k\}_{k \in \{0,1,...,m\} \cup \{0\}} \subset \text{SOS s.t. } \\
  q=s_0+\sum_{k\in \{1,...,m\}}h_{k}s_k \bigg\}
  \end{split}
\end{align}

\noindent The $d$-degree relaxation of the dual, $D_d$, can now be written as:
  \begin{flalign}\nonumber
    & & \inf_{\Xi_d} \hspace*{0.1cm} & \int_{X\times \Theta}w_d(x,\theta)\,d(\lambda_x\otimes\lambda_\theta) && \hspace*{-0.3cm} (D_d) \nonumber\\
    & & \text{st.} \hspace*{0.1cm} & w_d\in Q_d(X\times \Theta) && \\
    & & & v_d(T,x,\theta)\in Q_d(X_T \times \Theta) && \\
    & & & -\mathcal L_{f}v_d(t,x,\theta)\in Q_d(\mathcal T\times X \times \Theta) && \\
    & & & w_d-v_d(0,x,\theta)-1\in Q_d(X\times \Theta)
  \end{flalign}
where $\Xi_d=\Big\{ \big(v_d,w_d\big) \in \R_d[t,x,\theta]\times \R_d[x,\theta]\Big\}$.
A primal can similarly be constructed, but the solution to the dual can be used to directly generate a sequence of outer approximations to the uncertain backwards reachable set:
\begin{lem}
Let $w_d$ denote the $w$-component of the solution to $(D_d)$. Then $X_{(0,d)} = \{(x,\theta) \in X\times \Theta \mid w_d(x,\theta) \geq 1 \}$ is an outer approximation to $\spt(\mu_0)$ and $\lim_{d\to\infty}\lambda_x\otimes \lambda_\theta({ X}_{(0,d)} \backslash \spt(\mu_0)) = 0$.
\end{lem}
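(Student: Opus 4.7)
The plan is to prove two things: first, the outer-approximation containment $X_{(0,d)} \supset \spt(\mu_0)$ for every $d$; and second, the Lebesgue measure of $X_{(0,d)}$ converges down to that of $\spt(\mu_0)$. Together these yield $\lambda_x \otimes \lambda_\theta(X_{(0,d)} \setminus \spt(\mu_0)) \to 0$.

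For the outer approximation, I would fix an arbitrary $(x_0,\theta_0) \in \spt(\mu_0)$ and use the definition of $\spt(\mu_0)$ from the preceding lemmas to obtain a trajectory $\gamma$ of \eqref{eq:aug_system} starting at $(x_0,\theta_0)$ with $\gamma(T) \in X_T \times \Theta$. The quadratic-module constraints in $(D_d)$ imply the pointwise inequalities $-\mathcal{L}_f v_d \ge 0$ on $\mathcal{T}\times X\times\Theta$, $v_d(T,\cdot,\cdot) \ge 0$ on $X_T\times\Theta$, $w_d \ge 0$ on $X\times\Theta$, and $w_d \ge v_d(0,\cdot,\cdot) + 1$ on $X\times\Theta$. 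Evaluating the fundamental theorem of calculus for $v_d$ along $\gamma$ gives $v_d(T,\gamma(T)) - v_d(0,x_0,\theta_0) = \int_0^T \mathcal{L}_f v_d \, dt \le 0$, hence $v_d(0,x_0,\theta_0) \ge v_d(T,\gamma(T)) \ge 0$ and so $w_d(x_0,\theta_0) \ge 1$. This shows $(x_0,\theta_0) \in X_{(0,d)}$.

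For the measure convergence, outer approximation plus nonnegativity of $w_d$ gives the chain $\lambda_x\otimes\lambda_\theta(X_{(0,d)}) \le \int_{X\times\Theta} w_d \, d(\lambda_x \otimes \lambda_\theta)$ (since $w_d \ge 1$ on $X_{(0,d)}$ and $w_d \ge 0$ elsewhere), and $\int_{X\times\Theta} w_d \, d(\lambda_x \otimes \lambda_\theta) \ge \lambda_x\otimes\lambda_\theta(\spt(\mu_0))$ (since $w_d \ge 1$ on $\spt(\mu_0)$). So it suffices to prove the matching upper bound $\limsup_{d\to\infty} \int w_d \, d(\lambda_x \otimes \lambda_\theta) \le \lambda_x\otimes\lambda_\theta(\spt(\mu_0))$. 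Here I would invoke Theorem~\ref{thm:indicator} to extract a sequence $(v_k,w_k)$ of continuous feasible points of $(D)$ with $w_k \to \mathds{1}_{\spt(\mu_0)}$ in $L^1$. For each fixed $k$, Stone--Weierstrass (justified by Assum.~\ref{assump:poly} and compactness) gives polynomial approximants of $(v_k,w_k)$, and Putinar's Positivstellensatz --- valid because the semialgebraic descriptions of $\mathcal{T}\times X\times\Theta$, $X_T\times\Theta$, and $X\times\Theta$ can be rendered Archimedean by appending redundant ball constraints --- lets me certify SOS membership of the perturbed polynomials in the appropriate quadratic modules at some finite degree $d(k,\epsilon)$. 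Combining this feasible-point construction with the fact that $(D_d)$ is a minimization yields $\int w_d \le \int w_k + O(\epsilon)$ for $d \ge d(k,\epsilon)$; sending $\epsilon \to 0$ and then $k \to \infty$ completes the squeeze.

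The main obstacle is the Putinar step, because the continuous optimizers supplied by Theorem~\ref{thm:indicator} saturate the inequalities $-\mathcal{L}_f v \ge 0$ and $w - v(0,\cdot,\cdot) - 1 \ge 0$ on large regions, so they are not strictly positive and cannot be certified as SOS directly. To surmount this, I would perturb $v_k$ and $w_k$ by $\epsilon$ times suitable strictly positive polynomial slacks (e.g., $v_k + \epsilon\,\phi_v(t,x,\theta)$ with $\phi_v$ chosen so that $-\mathcal{L}_f \phi_v > 0$ on $\mathcal{T}\times X\times\Theta$, and likewise upward-biasing $w_k$ by a constant), making each constraint strictly satisfied; Putinar then applies. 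The resulting $O(\epsilon)$ inflation in the objective is absorbed by the double limit $d\to\infty$, $\epsilon\to 0$, and the claim follows exactly as in the standard Henrion--Korda convergence analysis.
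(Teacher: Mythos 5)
Your proposal is correct and follows exactly the standard Henrion--Korda argument (trajectory-wise integration of $\mathcal{L}_f v_d$ for the containment, plus Stone--Weierstrass and a strict-feasibility perturbation enabling Putinar's Positivstellensatz for the volume convergence) that the paper itself relies on: the paper states this lemma without proof, deferring implicitly to \cite{henrion2014convex}. You correctly identify the one genuinely delicate step (the near-optimal continuous solutions saturate the constraints, so a positive slack must be added before SOS certification), so there is nothing to object to.
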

Using the finite-degree truncation of the infinite dimensional problem presented above, one can solve for a sequence of convergent approximations of the support of $\mu_0$. To approximate the BRS as defined in Defn.~\ref{defn:brs}, we need to perform an additional step, described in the next section.

\subsection{Generating outer approximations of $X_0^\alpha$}
This section presents two methods to use the outer approximations of $\spt(\mu_0)$ derived by solving $(D_d)$, to estimate $X_0^\alpha$ as defined in Defn.~\ref{defn:brs}. The first method relies on discretizing the state space and computing the probability that each node in the mesh will reach the target set $X_T$ at $t=T$, through Monte Carlo simulation. The second method poses an additional optimization problem over polynomial functions that computes a polynomial representation to the level sets of interest. This second formulation can be solved by using semidefinite programming.
\subsubsection{A direct numerical approach}
\label{sssec:direct}
Given a value of $x$, to compute the probability of success, discretize the space of uncertainty, compute the spread of the $w_d$ that solves $(D_d)$ with respect to $\theta$ as the following
 \begin{align}
   \beta :=\sum_{i=1}^N \min(1,w(x,\theta_i))^k f_\theta(\theta_i)
   \label{eq:beta}
 \end{align}
 where $k\ge 1$, $\{\theta_i,\forall i\in \{1,\ldots,N\}\}$ is the set of discrete values of $\theta$, and $f_\theta(\theta)$ is the density (converted appropriately to a probability mass function) of $\mu_\theta$ with respect to $\lambda_\theta$.
\subsubsection{A more generic method}
Consider the following optimization problem for a given value of $k$
\begin{flalign}
  &&\sup_{q,r} & \phantom{3}\ip{\lambda\otimes \mu_\theta,q} +\ip{\mu_\theta,r} && (PP)\\
  &&\text{st. }&\phantom{3} 0\le q(x,\theta)\le 1                               && \forall (x,\theta)\in X\times \Theta\\
  &&& \phantom{3} q(x,\theta) \le w(x,\theta)^k                                 && \forall (x,\theta)\in X\times \Theta\\
  &&& \phantom{3}0\le r(x)\le \ip{\mu_\theta,q}                                 && \forall x\in X\\
  &&& \phantom{3}r\in \R[x]\\
  &&& \phantom{3}q\in \R[x,\theta]
\end{flalign}
The function $r$ is the function that traces the probability that every $x$ can reach $X_T$.

\section{Examples}
\label{sec:examples}
To solve the following examples, we have adopted the direct method presented in Sec.~\ref{sssec:direct}. All examples used an end time of $T = 1$.
\subsection{1D Constant Dynamics}
\begin{figure}[!t]
  {\includegraphics[trim = 0.00in 0.00in 0.00in 0.00in, clip=true, width=\columnwidth]{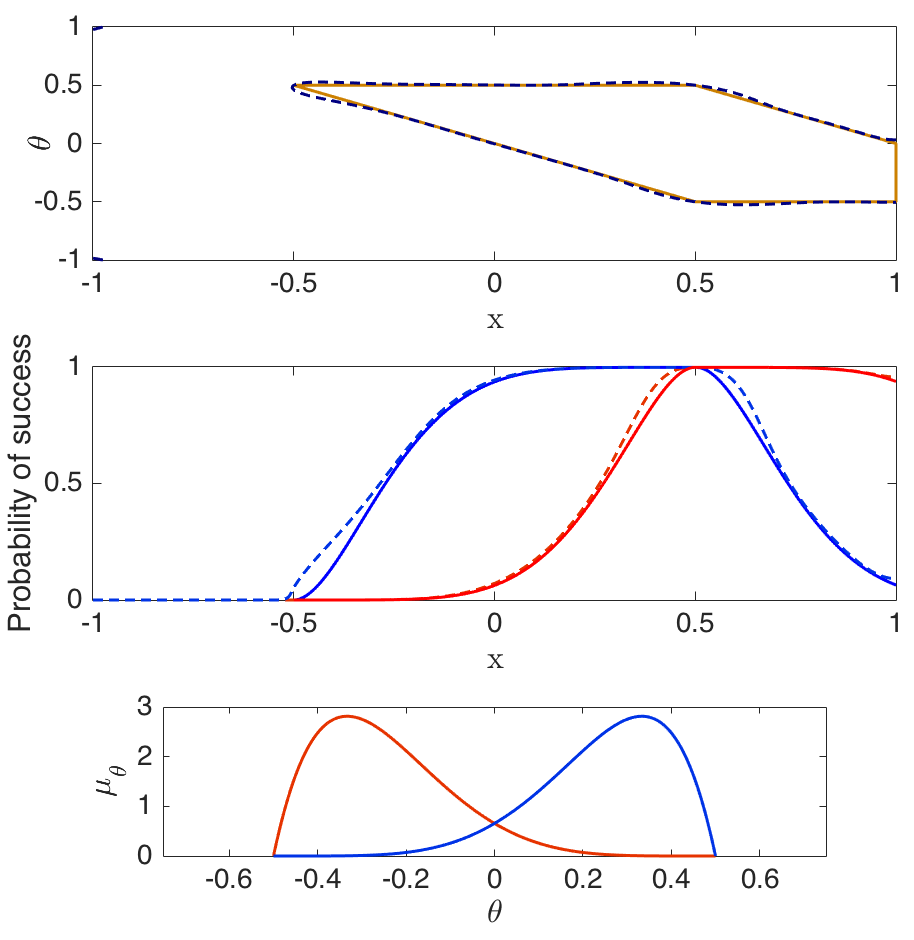}}
  \caption{1D constant dynamics example. Top subplot: BRS on the $x-\theta$ plane. The analytical solution is solid, and the outer approximation is dashed. Middle and bottom subplots: The probability of success across the entire $X$ domain, given two different $\mu_\theta$ distributions, shown with matching colors, the analytical solution in solid lines, and the estimated solution in dashed lines.}
  \label{fig:ex:1D}
\end{figure}
To illustrate the effect of uncertainty directly, consider a 1-dimensional system with constant, but uncertain, dynamics. This can be solved analytically by integrating with respect to $t$:
\begin{align}
  \dot x &= \theta \\
  \Rightarrow x &= \theta t + x_0 \label{1dsol}
\end{align}
where $\theta \in \Theta := [-0.5, 0.5]$ and $x \in X := [-1, 1]$. The target set is $X_T := [0,1]$. Using a pair of right- and left-heavy distributions, $f_1(\theta)$ and $f_2(\theta)$, define the uncertain parameter distribution as:
\begin{align}
  f_1(\theta) = -C(\theta - 0.5)^5(\theta + 0.5) \\
  f_2(\theta) = -C(\theta - 0.5)(\theta + 0.5)^5
\end{align}
where $C$ is chosen to normalize the mass of the distributions on $\Theta$.
\par
From Eqn.~(\ref{1dsol}), the slice of $\spt(\mu_0)$ at any $\theta$ is $x \in [-\theta, 1-\theta]$; the top subplot of Fig.~\ref{fig:ex:1D} shows the $\spt(\mu_0)$ thus computed, in solid lines. The outer approximation of $\spt(\mu_0)$ is plotted in dashed lines. By definition, any vertical slice within $\spt(\mu_0)$ at some $x$ is thus $\Gamma(x)$.
\par
The probability of success is computed as the integral of $w$ with respect to $\mu_\theta$ and is plotted in the second subplot of Fig.~\ref{fig:ex:1D}. The {\em true} probability was computed by discretizing each $f_i, i\in \{1,2\}$ distribution with 600 points and applying Eqn.~(\ref{eq:beta}). The estimated probability was computed with the same discretized distributions, but with $w$ given by the approximate $\spt(\mu_0)$ at each $x$. It is clear from both the top and middle subplots that this provides an outer approximation.
\par
The bottom subplot shows the $f_1$ (red) and $f_2$ (blue) distributions of $\theta$, for reference.

\subsection{Van der Pol Oscillator}
Recall the Van der Pol Oscillator introduced in Sec. II.B:
\begin{align}
  \dot x_1 =&\,    -2x_2\\
  \dot x_2=&\,    0.8x_1+(9+5\theta)x_2(x_1^2-0.21)
\end{align}
where $\theta \in [-0.5,0.5]$, distributed uniformly. $X=[-1,1]$, $X_T=\|x\|\le 0.5$. The uncertain BRS for a degree 12 relaxation is shown in Figure~\ref{fig:ex:vdp}. The estimated $\alpha$ = 1 level set very closely matches the $\alpha$ = 1 level set found through discretization. Effects of the uncertain parameter are most apparent on the top right and bottom left lobes of the uncertain BRS, where the level sets found through the proposed method are separated from those calculated via discretization by a thin strip.
\begin{figure}[!t]
  \includegraphics[trim =1.6in 3.3in 1.6in 3.5in,width=\columnwidth,clip=true]{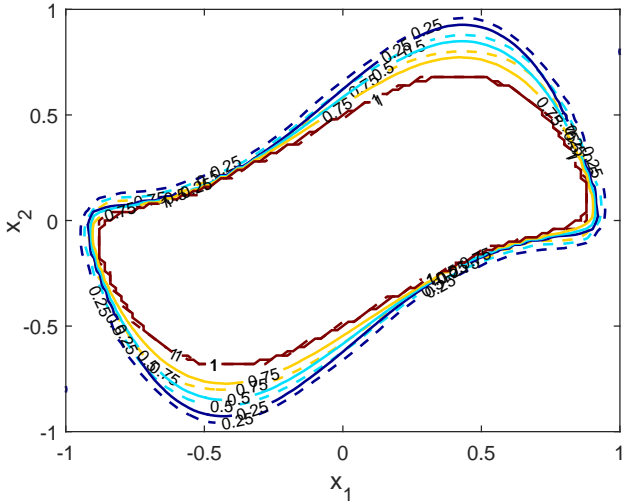}
  \caption{Contours of the level sets corresponding to different probabilities of success; dashed lines are estimates and solid lines are from state-space discretization.}
  \label{fig:ex:vdp}
\end{figure}
\subsection{Ground Vehicle Model}
The Dubins' car \cite{Dubins1957} is commonly used to model ground vehicle behavior, and describes the trajectory of a car's center of mass as a function of its velocity and steering angle. Consider an autonomous vehicle moving in a straight line (horizontally) with a constant velocity, $v = 0.5$ m/s. Suppose that the yaw-rate of the vehicle, $\dot \psi$, is an uncertain parameter, and is denoted by $\theta$. If the uncertain parameter is distributed according to $f_\theta$, the system's dynamics can be described by
\begin{align}
  \dot x = &\,    v\cos(\psi)\\
  \dot y = &\,    v\sin(\psi)\\
  \dot \psi =   &\, \theta
\end{align}
where $x$ and $y$ are the $x$-position, the $y$-position respectively. The dynamics of the vehicle can be represented using polynomials by utilizing the following state transformation \cite{devon2007}:
\begin{align}
  z_1 = &\, \psi,\\
  z_2 = &\, x\cos( \psi ) + y\sin( \psi),\\
  z_3 = &\, x\sin( \psi ) - y\cos ( \psi).
\end{align}
The dynamics of the transformed system are:
\begin{align}
  \dot z_1 = &\,    \theta,\\
  \dot z_2 = &\,     v - z_3\theta,\\
  \dot z_3 = &\,    z_2\theta.
\end{align}
With the above description, the vehicle will travel along trajectories of fixed curvature in the X-Y plane, but it is uncertain which trajectory the car will actually follow. Define a target zone $X_T$ as a ball of radius 0.25 about the origin: $X_T = \|[x;y]\|\leq 0.25$, we solve for the set of initial configurations that can reach the target zone with different probabilities, given that $f_\theta(\theta)$ is given by:
\begin{align}
  f_\theta(\theta) = -C(\theta - 3\pi/4)^3(\theta + 3\pi/4)^3
\end{align}
where $C$ is chosen to normalize the mass of $\mu_\theta$, and $\theta \in [-3\pi/4, 3\pi/4]$.
\par
A degree 14 relaxation was used to determine the uncertainty BRS according to the method proposed in Sec.~\ref{sec:prob}. The resultant $\alpha$-confidence sets were computed as described in Sec.~\ref{sec:implementation} and are presented in Fig.~\ref{fig:ex:dubins}. In order to compute the different confidence level sets, the X-Y plane was discretized into a 201x201 grid, $f_\theta(\theta)$ was discretized into a 501 element vector, and Eqn.~(\ref{eq:beta}) was employed with $k=8$. The true uncertain BRS was determined using Monte Carlo simulations with the same grid, and each node was simulated with 10,000 $\theta$s chosen according to $f_\theta$. The probability of success of any particular node is computed as the proportion of the number of values of $\theta$ for which the resultant trajectory reaches the target zone. From Fig.~\ref{fig:ex:dubins}, it is noted that the estimated $\alpha$-confidence BRS is an outer approximation of the {\em true} $\alpha$-confidence BRS.
\begin{figure}[!t]
\centering
  {\includegraphics[trim =1.5in 2.85in 1.5in 2.9in,width=\columnwidth,clip=true]{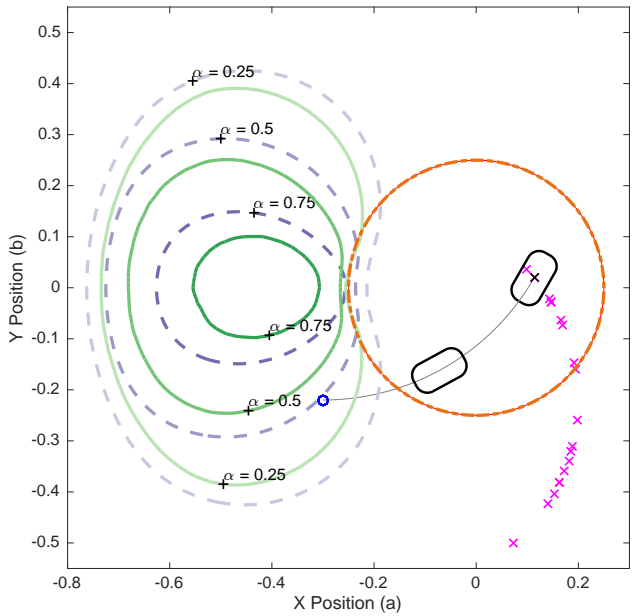}}
  \caption{Contours of the level sets corresponding to different probabilities of success; purple dashed lines are estimates using the proposed method and green solid lines are from state-space discretization. The target set is shown in orange. Xs show endpoints of trajectories emanating from the point [-0.3; -0.22]. One trajectory is plotted, with a small car traveling along it.}
  \label{fig:ex:dubins}
\end{figure}
\par
Figure~\ref{fig:ex:histogram} charts the {\em mean} probability that points on the estimated $\alpha$-confidence BRS reach the target zone. Points were taken from each level set, and each point was forward simulated with 10,000 $\theta$s randomly generated according to $f_\theta$. Atop the histogram, error-bars are overlayed that indicate the 2$\sigma$ band of the probabilities of points on the level set reaching the target zone. Observe that the average probability of success lies below the dashed line with slope one passing through the origin. This indicates that the estimated level sets are indeed outer approximations.
\begin{figure}[!t]
\centering
  {\includegraphics[trim =0.75in 5.5in 1in 2.5in,width=\columnwidth,clip=true]{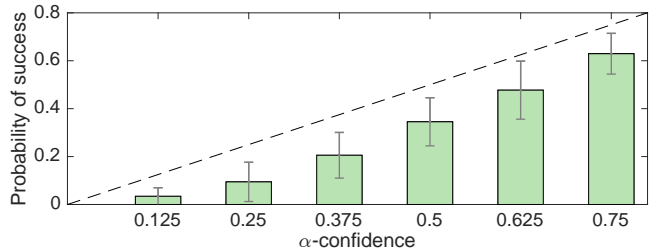}}
  \caption{Histogram showing mean probability of points on the $\alpha$-confidence BRS of reaching the target set. Error bars signify mean $\pm$ standard deviation of level set probability. Dashed line is the ideal probability at each level set.}
  \label{fig:ex:histogram}
\end{figure}

\section{Conclusion}
\label{sec:conclusion}
In this paper, a convex optimization technique to approximate the $\alpha$-confidence backwards reachable set of a parametrically uncertain system is presented. Using the notion of occupation measures, we propose a two step methodology to construct a sequence of convergent approximations of the set of interest -- the first step optimizes over the space of the ring of polynomials with a specified degree and is solved as a sums-of-square program; the second step builds on the result of the first step and constructs an outer approximation of the $\alpha$-confidence reachable set. The proposed method is validated numerically on three examples of varying complexities.

\bibliographystyle{ieeetr}

\end{document}